\setlist[enumerate]{itemsep=0mm}
\setlist[itemize]{itemsep=0mm}
\newcommand{\fonction}[5]{#1\colon\left\{\begin{array}{ccc} #2 &  \rightarrow & #3 \\  #4 & \mapsto & #5  \end{array}\right.}      
\newtheorem{thm-intro}{Theorem}
\newtheorem{thm}{Theorem}[section]
\newtheorem{prop}[thm]{Proposition}
\theoremstyle{definition}
\newtheorem{defn}[thm]{Definition}
\newtheorem{defs}[thm]{Definitions}
\newtheorem{rmk}[thm]{Remark}
\newcommand{\elem}{element }
\renewcommand{\Im}{\mathrm{Im\,}}
\newcommand{\Ker}{\mathrm{Ker\,}}
\newcommand{\Pic}{\mathrm{Pic\,}}
\newcommand{\Spec}{\mathrm{Spec\,}}
\newcommand{\PGL}{\mathrm{PGL\,}}
\DeclareMathOperator{\Id}{Id}
\newcommand{\N}{\mathbb N}
\newcommand{\Z}{\mathbb Z}
\newcommand{\Q}{\mathbb Q}
\newcommand{\R}{\mathbb R}
\newcommand{\C}{\mathbb C} 
\renewcommand{\phi}{\varphi}
\renewcommand{\bar}{\overline}
\renewcommand{\P}{\mathbb P}
\newcommand{\Aut}{\mathrm{Aut\,}}
\renewcommand{\PGL}{\mathrm{PGL}}
\renewcommand{\O}{\mathrm{O}}
\newcommand{\intl}{[\![}
\newcommand{\intr}{]\!]}
\renewcommand{\phi}{\varphi}
\renewcommand{\tilde}{\widetilde}
\newcommand{\dbar}[1]{\overline{\rule[-.3\baselineskip]{0pt}{0.8\baselineskip} #1}}
\newcommand{\dsum}{\displaystyle\sum}
\begin{document}
\title[Real structures on rational surfaces]{Real structures on rational surfaces \\and automorphisms acting trivially on Picard groups}
\author{Mohamed Benzerga}
\address{Universit\'e d'Angers, \textsc{Larema}, UMR CNRS 6093, 2, boulevard Lavoisier, 49045 Angers cedex 01, France}
\email{mohamed.benzerga@univ-angers.fr}
\noindent
\subjclass{14J26, 14J50, 14P05, 12G05}
\keywords{Rational surfaces, automorphism groups, real structures, real forms, Galois cohomology}

\bibliographystyle{amsalpha} 
\begin{abstract}
In this article, we prove that any complex smooth rational surface $X$ which has no automorphism of positive entropy has a finite number of real forms  (this is especially the case if $X$ cannot be obtained by blowing up $\P^2_{\C}$ at $r\geq 10$ points). In particular, we prove that the group $\Aut^{\#}X$ of complex automorphisms of $X$ which act trivially on the Picard group of $X$ is a linear algebraic group defined over $\R$.
\end{abstract}
\renewcommand{\subjclassname}
{\textup{2010} Mathematics Subject Classification}

\maketitle

\section*{Introduction}\label{intro}
In real algebraic geometry, one is interested in the problem of the classification of real varieties which become isomorphic after complexification. For example, the complexification of $\P^1_{\R}$ is of course $\P^1_{\C}$, and the complexification of the real conic $X_0$ of equation $x^2+y^2+z^2 = 0$ in $\P^2_{\R}$ (i.e. the complex conic of $\P^2_{\C}$ defined by the same equation) is also isomorphic to $\P^1_{\C}$ because the conic is smooth rational over $\C$. However, $X_0$ is not isomorphic to $\P^1_{\R}$ as a real variety : indeed, the set of real points of $X_0$ is empty whereas $\P^1_{\R}$ has real points. One says that $X_0$ and $\P^1_{\R}$ are two distinct \textit{real forms} of $\P^1_{\C}$. We now define precisely the concepts involved in order to present the problem with which this paper deals.\\
\indent A \textit{real structure} on a complex algebraic variety $X$ is an antiregular\footnote{This is equivalent to "antiholomorphic" if $X$ is projective.} involution $\sigma$ on $X$ ; the antiregularity condition means that the following diagram commutes :
\begin{center}
\begin{tikzpicture}[scale=2][description/.style={fill=white,inner sep=2pt}]
  \matrix (m) [matrix of math nodes, row sep=3em,
  column sep=4.5em, text height=1.5ex, text depth=0.25ex]
  {X& X \\
  \Spec\C&\Spec\C \\ };
  \path[->,font=\scriptsize]
  (m-1-1) edge node[auto] {$\sigma$} (m-1-2)
  edge node[auto] {} (m-2-1)
  (m-1-2) edge node[auto] {} (m-2-2)
  (m-2-1) edge node[auto] {$\Spec(z\mapsto \bar{z})$} (m-2-2);
\end{tikzpicture}
\end{center}
The data of a complex quasiprojective variety $X$ (as a scheme over $\C$) and of a real structure $\sigma$ on it corresponds to the data of a scheme $X_0$ over $\R$ : if $X:=X_0\times_{\Spec\R}\Spec\C$ is the complexification of $X_0$, then $X_0$ corresponds to the natural real structure $\Id_{X_0}\times\Spec(z\mapsto \bar{z})$. Conversely, if $(X,\sigma)$ is given, then $X_0 = X/\langle \sigma\rangle$ is a scheme over $\R$ such that $X_0\times_{\Spec \R}\Spec\C \simeq X$ : it is called a \textit{real form} of $X$ (see \cite[II. Ex.4.7]{hartshorne}, \cite[I.1.4]{silhol}, \cite[Prop.2.6]{borel-serre}).\\
\indent Two real structures $\sigma$ and $\sigma'$ on $X$ are \textit{equivalent} if there exists a $\C$-automorphism $\phi$ of $X$ such that $\sigma'=\phi\sigma\phi^{-1}$. Such an automorphism $\phi$ is equivariant for the two structures, thus it corresponds to an $\R$-isomorphism between the two $\R$-schemes $X_0=X/\langle \sigma\rangle$ and $X'_0 = X/\langle\sigma'\rangle$. Conversely, an $\R$-isomorphism between two $\R$-schemes $X_0$ and $X'_0$ corresponds to a $\C$-isomorphism between their complexifications which is equivariant with respect to the natural real structures ; thus, the equivalence classes of real structures on $X$ correspond to the isomorphism classes of real forms of $X$. Moreover, if $G=\langle \sigma\rangle$, then $H^1(G,\Aut_{\C} X)$ is the set of equivalence classes of real structures on $X$, where $G$ acts on $\Aut_{\C} X$ by conjugation (cf. \cite[2.6]{borel-serre}). The aim of this article is to give a partial answer to the following open problem :\\
\indent \textbf{Problem.}\,\,\textit{Let $X$ be a rational surface. Does $X$ have a finite number of real forms ? Namely, is there a finite number of equivalence classes of real structures on $X$ ?} \\
\indent This question was asked for algebraic surfaces in general by Kharlamov in \cite{kharlamov}. It has positive answer for minimal rational surfaces, cf.\cite{kharlamov}, \cite[III.6.11.7]{dik}, also for minimal algebraic surfaces of nonnegative Kodaira dimension, cf. \cite[Appendix D]{dik}, and in the case of Del Pezzo surfaces, cf.\cite{russo} (see also \cite{kollar} which uses minimal model program for schemes over $\R$). Let us also note that the finiteness of real structures is known for projective spaces, cf. \cite[1.1]{russo}, for abelian varieties, cf. \cite[3.5,6.1]{borel-serre} and \cite[Prop.7]{silhol82}, and for varieties of general type (since their automorphism groups are finite, see for example \cite{matsumura-bir} or \cite{autos-generaltype}).\footnote{So that one can obtain finiteness for smooth projective curves.} Finally, it should be noted that, to our knowledge, there is no example of a variety having an infinite number of non-equivalent real forms.\\
\indent In view of the above, we will need to understand automorphism groups of rational surfaces ; thus, we briefly recall classical results about automorphisms of rational surfaces (in the following, every rational surface is supposed to be \textit{complex and smooth}). If $X$ is a rational surface, then $\Pic X$ is a free abelian group of finite rank $\rho(X)$ and the intersection form on $X$ induces a symmetric bilinear form of signature $(1,\rho(X)-1)$ on $\Pic X$ by Hodge Index Theorem. $\Aut X$ acts isometrically on $\Pic X$ by pulling-back divisor classes and if $p : \Aut X\to\O(\Pic X)$ is the morphism corresponding to this action, we denote by $\Aut^{\#}X$ the kernel of $p$ and by $\Aut^*X$ the image of $p$ (as in \cite{harbourne}) and thus we have an exact sequence :
$$1 \longrightarrow \Aut^{\#}X \longrightarrow \Aut X \longrightarrow \Aut^*X \longrightarrow 1$$
\indent These two groups are very different in nature : $\Aut^{\#}X$ is a linear algebraic group whereas  $\Aut^*X$ is a discrete group of isometries of the lattice $\Pic X$. In particular, if $X$ is a basic rational surface\footnote{\,In our work, we will have to distinguish two kinds of rational surfaces : a rational surface is called \textit{basic} if it dominates $\P^2$ and non-basic otherwise.}, then $\Aut^*X$ is included in a Weyl group $W_X$ (see \cite[VI.Lemma 3]{dolgachev-ortland} or \cite[Corollary p.283]{nagata-rat-surfaces-II}) which is finite if, and only if, the number $r$ of points which are blown-up over $\P^2$ to obtain $X$ satisfies $r\leq 8$. Finally, thanks to results of Gromov \cite{gromov-entropy} and Yomdin \cite{yomdin}, if $\phi\in\Aut X$, we may define the \textit{topological entropy} $h(\phi)$ of $\phi$ as the logarithm of the spectral radius of the automorphism $\phi^* $ of $\Pic X\otimes_{\Z}\R$ induced by $\phi$. As we will see, the presence of automorphisms of positive entropy in $\Aut X$ indicates in some sense how big is the group $\Aut^*X$.\\
\indent In fact, our main result is the following theorem : 
\begin{thm-intro}\label{main} If a rational surface $X$ has an infinite number of non-equivalent real structures, then $X$ is a blowing up of $\P^2$ at $r\geq 10$ points and has at least one automorphism of positive entropy. \end{thm-intro}
Let us note that this result contains the case of del Pezzo surfaces and minimal rational ones, but it also contains the different cases of blow-ups of points in special positions (collinear points, conical sextuplets, infinitely near points...) ; however, in section \ref{proof-main}, we derive this result in a synthetic manner. Before this, we will need to study $\Aut^{\#}X$ in section \ref{aut-diese} : we prove that a real structure $\sigma$ on $X$ induces a real structure on $\Aut^{\#}X$. Finally, in section \ref{other}, we show some other finiteness results, which are not included in Theorem \ref{main} : in particular, we prove that if a rational surface $X$ has an infinite number of non-equivalent real structures, then $\Aut^*X$ must contain a non-abelian free group.

\section{Real structure on $\Aut^{\#}X$}\label{aut-diese}
We will use the following convention (cf. \cite[I.4]{silhol}) :
\begin{defn} Let $X$ be a smooth projective complex variety, $\phi$ be an automorphism of $X$ and $\sigma$ be a real structure on $X$.
\begin{enumerate}
\item If $f : X \dashrightarrow \C$ is a rational function on $X$, then we set $\phi^*f := f\circ\phi$ and $\sigma^*f := \bar{f\circ\sigma}$.
\item If $D=\{(U_i,f_i)\}$ is a Cartier divisor, then we set $\phi^*D := \{(\phi(U_i),\phi^*f_i)\}$ and $\sigma^*D := \{(\sigma(U_i),\sigma^*f_i)\}$. 
\end{enumerate}
\end{defn}
\begin{thm}\label{autdiese} Let $X$ be a smooth irreducible projective complex variety and $\sigma$ be a real structure on $X$.\\
$\Aut^{\#}X$ is a complex linear algebraic group\footnote{This result is probably well-known but we prove it in the course of proving the second part of this Theorem.} and the map $\fonction{\tilde{\sigma}}{\Aut^{\#}X}{\Aut^{\#}X}{\phi}{\sigma\phi\sigma^{-1}}$ defines a real structure on $\Aut^{\#}X$. \end{thm}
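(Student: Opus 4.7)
The plan is to embed $\Aut^{\#}X$ as a closed subgroup of a projective linear group via a $\sigma$-invariant polarization, which makes both the algebraic structure on $\Aut^{\#}X$ and the conjugation by $\sigma$ transparent.

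\textbf{Step 1 (Linear structure).} I pick any ample line bundle $L$ on $X$; every $\phi\in\Aut^{\#}X$ fixes $[L]\in\Pic X$, so $\Aut^{\#}X$ is contained in the stabilizer $\Aut(X,[L])$ of $[L]$ in $\Aut X$. For $d\gg0$, $L^{\otimes d}$ is very ample and yields a closed embedding $X\hookrightarrow\P(V^{\vee})$ with $V=H^0(X,L^{\otimes d})$. Any $\phi\in\Aut(X,[L])$ lifts, uniquely up to a scalar, to an element of $\PGL(V)$ preserving the image of $X$; this produces an injective group homomorphism $\Aut(X,[L])\hookrightarrow\PGL(V)$ with Zariski-closed image, exhibiting $\Aut(X,[L])$ as a linear algebraic group. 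The extra conditions cutting out $\Aut^{\#}X$ inside it (fixing every class of $\Pic X$, not just $[L]$) are Zariski-closed, so $\Aut^{\#}X$ is a closed subgroup of $\PGL(V)$, hence a linear algebraic group.

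\textbf{Step 2 ($\tilde\sigma$ stabilizes $\Aut^{\#}X$ and is involutive).} For $\phi\in\Aut^{\#}X$ and $L\in\Pic X$, the chain rule for pullbacks gives
\[
(\sigma\phi\sigma^{-1})^{*}L \;=\; (\sigma^{-1})^{*}\phi^{*}\sigma^{*}L \;=\; (\sigma^{-1})^{*}\sigma^{*}L \;=\; L,
\]
so $\tilde\sigma(\phi)\in\Aut^{\#}X$. Since $\sigma^{2}=\mathrm{id}$ and $\tilde\sigma$ is visibly a group homomorphism, it is an involutive automorphism of $\Aut^{\#}X$.

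\textbf{Step 3 (Antiregularity).} Now I upgrade the choice of $L$ to a $\sigma$-invariant ample line bundle, for instance $L=L_{0}\otimes\sigma^{*}L_{0}$ with $L_{0}$ any ample one. For $d\gg0$, the antiregular involution $\sigma$ lifts to an antilinear involution $\hat\sigma$ of $V=H^0(X,L^{\otimes d})$, in such a way that the embedding $X\hookrightarrow\P(V^{\vee})$ is equivariant for $\sigma$ and the induced real structure on $\P(V^{\vee})$. Under the embedding of Step 1, $\tilde\sigma$ becomes the restriction to $\Aut^{\#}X$ of conjugation by $\hat\sigma$ on $\PGL(V)$. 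But conjugation by an antilinear involution is an antiregular involution of $\PGL(V)$ (in any $\hat\sigma$-real basis it acts on matrix entries by complex conjugation), and restricting to the closed subgroup $\Aut^{\#}X$ produces an antiregular involution of $\Aut^{\#}X$, i.e.\ a real structure.

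\textbf{Main obstacle.} The one genuinely delicate point is producing a true antilinear \emph{involution} $\hat\sigma$ of $V$ lifting $\sigma$, rather than merely an antilinear automorphism whose square is a nonzero scalar; this obstruction lies in $\C^{\times}$ and is killed by rescaling, since $\C^{\times}$ is divisible. Once the compatible embedding into a real $\P(V^{\vee})$ is in place, the antiregularity of $\tilde\sigma$ reduces to the standard fact that conjugation by an antilinear involution defines a real structure on $\PGL(V)$.
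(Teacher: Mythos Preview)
Your proposal follows essentially the same route as the paper: choose a $\sigma$-invariant very ample class, embed $X$ equivariantly into a projective space carrying an induced real structure, realize $\Aut^{\#}X$ as a closed subgroup of $\PGL(V)$, and identify $\tilde\sigma$ with the restriction of conjugation by the induced antiregular involution on $\PGL(V)$. The paper does exactly this, working with a divisor $D=p(D_0+\sigma^*D_0)$ satisfying $\sigma^*D=D$ and carrying out the verifications in coordinates.

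One point in your write-up needs correction. In your ``Main obstacle'' paragraph, the claim that the obstruction to lifting $\sigma$ to an antilinear \emph{involution} on $V$ ``lies in $\C^\times$ and is killed by rescaling, since $\C^\times$ is divisible'' is wrong. If $\hat\sigma$ is antilinear with $\hat\sigma^2=\lambda\cdot\mathrm{id}$, then applying $\hat\sigma$ to both sides gives $\lambda\hat\sigma=\bar\lambda\hat\sigma$, so $\lambda\in\R^\times$; and replacing $\hat\sigma$ by $c\hat\sigma$ replaces $\lambda$ by $|c|^2\lambda$. Hence the obstruction lives in $\R^\times/\R_{>0}\simeq\{\pm 1\}$ and cannot always be rescaled away (the $-1$ case is the quaternionic structure). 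Fortunately this does not damage your argument, for two independent reasons: (i) you only need the induced maps on $\P(V^\vee)$ and on $\PGL(V)$ to be involutions, and that holds as soon as $\hat\sigma^2$ is scalar; (ii) with your specific choice $L=L_0\otimes\sigma^*L_0$---equivalently the paper's $\sigma^*D=D$ as an actual divisor, not merely up to linear equivalence---the pullback $s\mapsto \overline{s\circ\sigma}$ on $H^0$ is already an honest antilinear involution, so the obstacle never arises. The paper implicitly uses (ii).
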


\begin{proof}[Sketch of proof] The idea of the proof is the following : we use an embedding of $X$ in a projective space $\P^n$ and we define a real structure $\sigma_1$ on $\P^n$ for which this embedding is equivariant. Then we show that the elements of $\Aut^{\#}X$ can be extended to automorphisms of $\P^n$, realizing $\Aut^{\#}X$ as an algebraic subgroup of $\PGL_{n+1}(\C)$. Finally, we are reduced to prove that $\fonction{\tilde{\sigma_1}}{\Aut\P^n}{\Aut\P^n}{\phi_1}{\sigma_1\phi_1\sigma_1^{-1}}$ defines a real structure on $\Aut\P^n$ which stabilizes the image of $\Aut^{\#}X$ and this is finally a computation in homogeneous coordinates in $\P^n$.
\begin{itemize}[label=\textbullet,font=\small]
\item \textbf{Step 1 : the embedding}\\
Since $X$ is projective, we can find an ample divisor $D_0$ on $X$ : by Nakai-Moishezon criterion, $D_1 := D_0+\sigma^*D_0$ is also ample on $X$ and there exists $p\in\N$ such that $D := pD_1$ is very ample on $X$ and satisfies $\sigma^*D = D$.\\
Now, let $(s_0,\dots,s_n)$ be a basis of the $\C$-vector space 
$$H^0(D) := \{ f\in\mathcal K(X)^*|\, D+(f)\geq 0\}\cup \{0\}$$
We consider the embedding defined by this basis :
$$\fonction{\Phi}{X}{\P^n}{x}{[s_0(x) : \dots : s_n(x)]}$$
\item \textbf{Step 2 : the real structure}\\
Note that $\forall j\in \intl 0 ; n\intr$, $\sigma^*s_j\in H^0(D)$ since $\sigma^*D=D$ : thus, there exist some complex numbers $\sigma_{j0},\dots,\sigma_{jn}$ such that $\displaystyle \sigma^*s_j = \dbar{s_j\circ\sigma} = \sum_{i=0}^n\sigma_{ji}s_i$. We now define $\sigma_1$ by :
$$\fonction{\sigma_1}{\P^n}{\P^n}{\left[x_0:\dots:x_n\right]}{\left[\dsum_{i=0}^n \dbar{\sigma_{0i}x_i} : \dots : \dsum_{i=0}^n \dbar{\sigma_{ni}x_i}\right]}$$
One can verify that $\sigma_1$ is a real structure on $\P^n$ and that $\Phi$ is equivariant for $\sigma$ and $\sigma_1$.
\item\textbf{Step 3 : building an automorphism of $\P^n$ from an element of $\Aut^{\#}X$}\\
If $\phi\in\Aut^{\#}X$, then $\phi^*D\sim D$ thus there exists $f_{\phi}\in\mathcal K(X)^*$ such that $\phi^*D = D + (f_{\phi})$. Now, note that $\forall j\in \intl 0 ; n\intr$, $f_{\phi}\phi^*s_j\in H^0(D)$ since $\phi^*s_j\in H^0(\phi^*D)$ : thus, there exist some complex numbers $a_{j0},\dots,a_{jn}$ such that $\displaystyle f_{\phi}\phi^*s_j = \sum_{i=0}^n a_{ji}s_i$. We now define $\phi_1\in\Aut \P^n$ by :
$$\fonction{\phi_1}{\P^n}{\P^n}{\left[x_0:\dots:x_n\right]}{\left[\dsum_{i=0}^n a_{0i}x_i : \dots : \dsum_{i=0}^n a_{ni}x_i\right]}$$
We have to verify that $\Phi\circ\phi = \phi_1\circ\Phi$. Let $\textrm{Ind}(f_{\phi})$ be the indeterminacy locus of $f_{\phi}$ and $U := \{ x\in X\setminus \textrm{Ind}(f_{\phi})|\;f_{\phi}(x)\neq 0\}$. For $x\in U$, one has :
\begin{eqnarray*}
\Phi(\phi(x)) &= &[s_0(\phi(x)) : \dots : s_n(\phi(x))]\\
&= &\left[f_{\phi}(x)\phi^*s_0(x) : \dots : f_{\phi}(x)\phi^*s_n(x)\right]\\
&= &\left[\sum_{i=0}^n a_{0i}s_i(x) : \dots : \sum_{i=0}^n a_{ni}s_i(x)\right] = \phi_1(\Phi(x))\\
\end{eqnarray*}
Hence $\Phi\circ\phi$ and $\phi_1\circ\Phi$ are two isomorphisms from $X$ to $\P^n$ which coincide on the dense open subset $U$ of $X$ : thus they coincide on all of $X$.
\item\textbf{Step 4 : the embedding of $\Aut^{\#}X$ into $\Aut\P^n$}\\
We define $\fonction{\beta}{\Aut^{\#}X}{\Aut\P^n}{\phi}{\phi_1}$ : $\beta$ is clearly injective (since the data of $\beta(\phi)$ determines $\beta(\phi)|_{\Phi(X)}$ and thus $\phi$) and a slightly tedious computation shows that $\beta$ is a group morphism. Hence $\Aut^{\#}X$ can be equipped with a structure of linear algebraic group over $\C$ for which $\beta$ is a morphism of algebraic groups.
\item\textbf{Step 5 : antiregularity of $\tilde{\sigma_1}$ and $\tilde{\sigma}$}\\
Finally, we define $\fonction{\tilde{\sigma}}{\Aut^{\#}X}{\Aut^{\#}X}{\phi}{\sigma\phi\sigma^{-1}}$ and $\fonction{\tilde{\sigma_1}}{\Aut\P^n}{\Aut\P^n}{\phi}{\sigma_1\phi\sigma_1^{-1}}$.\\
Now, a really tedious computation shows both that $\beta\tilde{\sigma} = \tilde{\sigma_1}\beta$ and that $\tilde{\sigma_1}$ is antiregular. Thus, we can conclude that $\tilde{\sigma} = \left(\beta|_{\Im\beta}\right)^{-1}\,\tilde{\sigma_1}\,\beta$ is antiregular ; as it is clearly an involution, we obtain the desired result.
\end{itemize} 
\end{proof}

\section{Proof of the Main Theorem}\label{proof-main}
As we have seen in the Introduction, if $X$ is a complex quasiprojective surface with a real structure $\sigma$ and $G=\langle\sigma\rangle$, then $H^1(G,\Aut X)$ is the set of equivalence classes of real structures on $X$, where $G$ acts on $\Aut X$ by conjugation (notice that this set is not defined if $X$ has no real structure, but in this case, our problem is trivially solved). Thus, we have to consider this cohomology set in order to prove Theorem \ref{main}.\\
\indent For the convenience of the reader, we now recall the fundamental definitions and results about non-abelian group cohomology of finite groups (one can generalize to profinite topological groups).
\begin{defs} Let $G$ be a finite group.
\begin{itemize}
\item A \textit{$G$-group} is a group $A$ on which $G$ acts by automorphisms. In other words, $A$ is equipped with an action of $G$ such that : $\forall \sigma\in G,\,\forall a,b\in A, \sigma.(ab) = (\sigma.a)(\sigma.b)$. If $A$ is abelian, one also calls it a \textit{$G$-module}.
\item We denote by $A^G$ or $H^0(G,A)$ the set of fixed points of this action.
\item A map $\fonction{a}{G}{A}{\sigma}{a_{\sigma}}$ is a \textit{cocycle} if :
$$\forall \sigma,\tau\in G,\,a_{\sigma\tau} = a_{\sigma}\sigma.a_{\tau}$$
We will denote by $Z^1(G,A)$ the set of cocycles.
\item Two cocycles $a$ and $b$ are \textit{equivalent} (we note $a\sim b$) if :
$$\exists \alpha\in A, \forall \sigma\in G,\,b_{\sigma} = \alpha^{-1}a_{\sigma}\sigma.\alpha$$
\item The \textit{first cohomology set of $G$ with coefficients in $A$} is $H^1(G,A) := Z^1(G,A)/\sim$. When $A$ is not abelian, it is only a \textit{pointed set} (the distinguished point being the class of the cocycle $a : \sigma \mapsto \Id$).
\item If $B$ is a $G$-group and $A$ a normal subgroup of $B$ stable by the action of $G$, then for any $b\in Z^1(G,B)$, one can define another action of $G$ on $A$ by :
$$\forall \sigma\in G,\;\forall a\in A,\;\sigma*a := b_{\sigma}\,\sigma.a\,b_{\sigma}^{-1}$$
The group $A$ endowed with this new action is denoted by $A_b$ (as a $G$-group) and we say that $A_b$ is obtained by \textit{twisting} the $G$-group $A$ by $b$.
\item If $(X,x)$ and $(Y,y)$ are two pointed sets, a \textit{morphism} between them is a map $f:X\to Y$ such that $f(x) = y$. 
\item An \textit{exact sequence of pointed sets} is a sequence of morphisms of pointed sets 
$$(X,x)\overset{f}{\longrightarrow} (Y,y) \overset{g}{\longrightarrow} (Z,z)$$
such that $\Im f = \Ker g := g^{-1}(\{z\})$. Note that $\Ker g = \{y\}$ does not imply that $g$ is injective.
\end{itemize} 
\end{defs}
\begin{rmk} By definition, every cocycle $a$ satisfies $a_{1_G} = \Id$ (where $1_G$ and $\Id$ are the neutral elements of $G$ and $A$ resp.). Moreover, since $G=\langle \sigma \rangle \simeq \Z/2\Z$ for us, one can easily verify that the data of a cocycle is equivalent to the data of an \elem $a_{\sigma}$ of $A$ such that $a_{\sigma}\sigma.a_{\sigma} = \Id$. \end{rmk}
There exists an exact sequence in non-abelian cohomology (see \cite[1.17,1.20]{borel-serre} or \cite[I.\S 5.5]{serre-cohgal-english}\footnote{\,In fact, the following proposition is slightly simpler than the original one, which replaces the hypothesis of the finiteness of $H^1(G,A_b)$ for every $b$ by the weaker hypothesis of the finiteness of $\Im(H^1(G,A_b)\to H^1(G,B_b)) \simeq (g^*)^{-1}(\{g^*(b)\})$ for every $b$.}):
\begin{prop}\label{exact} If $0\longrightarrow A \overset{f}{\longrightarrow} B \overset{g}{\longrightarrow} C \longrightarrow 0$ is an exact sequence of $G$-groups (in particular, $f$ and $g$ are $G$-equivariant), then we have the following exact sequence of pointed sets :
$$ 0 \longrightarrow A^G \longrightarrow B^G \longrightarrow C^G\longrightarrow H^1(G,A) \overset{f^*}{\longrightarrow} H^1(G,B) \overset{g^*}{\longrightarrow} H^1(G,C)$$ 
Moreover, if $H^1(G,C)$ is finite and if $H^1(G,A_b)$ is finite for all $b\in Z^1(G,B)$, then $H^1(G,B)$ is finite.\end{prop}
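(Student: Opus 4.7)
The plan is to treat the two assertions separately. For the exactness of the six-term sequence, the only non-trivial piece is the construction of the connecting map $\delta : C^G \to H^1(G,A)$. Given a $G$-fixed $c \in C$, I would choose any preimage $b \in B$ under $g$ and set $\delta(c)_\sigma := b^{-1}(\sigma.b)$; this lies in $f(A)$ because $g(b^{-1}\,\sigma.b) = c^{-1}(\sigma.c) = 1$, and one checks routinely that $\sigma \mapsto b^{-1}\,\sigma.b$ satisfies the cocycle condition and that its class does not depend on the choice of preimage. Exactness at $A^G$, $B^G$ and $C^G$ is immediate from the definitions, while exactness at $H^1(G,A)$ and $H^1(G,B)$ reduces to a direct manipulation of representative cocycles, analogous to the abelian case but paying attention to the order of multiplications in the cocycle identity.

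For the finiteness assertion, the key tool is the twisting technique. Since the target $H^1(G,C)$ of $g^*$ is finite by hypothesis, it suffices to show that every non-empty fibre of $g^*$ is finite. Given $b \in Z^1(G,B)$, equip $B$ with the twisted $G$-action $\sigma \ast x := b_\sigma\,(\sigma.x)\,b_\sigma^{-1}$; since $A$ is normal and $G$-stable in $B$, this action restricts to $A$, producing $G$-groups $A_b$ and $B_b$ together with an exact sequence $1 \to A_b \to B_b \to C_b \to 1$ (the $G$-action on $C$ is unchanged because $f(A)$ is central to the conjugation in $C$). The classical twisting bijection (see \cite[I.\S 5]{serre-cohgal-english} or \cite[1.20]{borel-serre}) identifies the fibre $(g^*)^{-1}(\{[g^*(b)]\})$ in $H^1(G,B)$ with the image of the twisted $f^*_b : H^1(G,A_b) \to H^1(G,B_b)$, via the rule sending a cocycle $a \in Z^1(G,A_b)$ to the cocycle $\sigma \mapsto a_\sigma b_\sigma \in Z^1(G,B)$. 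Consequently the fibre has cardinality at most $|H^1(G,A_b)|$, which is finite by hypothesis.

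Combining the two steps, $H^1(G,B)$ is a finite disjoint union of finite fibres, hence finite. The main obstacle in a fully self-contained write-up is not conceptual but bookkeeping: one must verify that the twisting construction descends to cohomology, that it is independent of the representative $b$ chosen within its class, and that the resulting map really parametrises the fibre of $g^*$. As this material is worked out in detail in the two references above, in the paper I would simply invoke it rather than reproduce the argument.
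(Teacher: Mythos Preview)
The paper does not supply its own proof of this proposition: it simply cites \cite[1.17,1.20]{borel-serre} and \cite[I.\S5.5]{serre-cohgal-english}, and the footnote even spells out the fibre description $(g^*)^{-1}(\{g^*([b])\}) \simeq \Im\big(H^1(G,A_b)\to H^1(G,B_b)\big)$ that you use. Your sketch is exactly the standard argument from those references, and your concluding remark that you would ``simply invoke it rather than reproduce the argument'' is precisely what the paper does.

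One inessential slip: the parenthetical ``the $G$-action on $C$ is unchanged because $f(A)$ is central to the conjugation in $C$'' is not correct. Twisting $C$ by $b$ means conjugating by $g(b_\sigma)\in C$, not by an element of $f(A)$, so in general $C_b\neq C$ as $G$-groups. This does not affect your argument, since the identification of the fibre with the image of $H^1(G,A_b)\to H^1(G,B_b)$ goes through the twisting bijections $H^1(G,B_b)\to H^1(G,B)$ and $H^1(G,C_b)\to H^1(G,C)$ regardless of whether the twisted action on $C$ is the original one.
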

Since we want to apply this result to the exact sequence given in the Introduction, we have to check that this sequence is $G$-equivariant. But it suffices to remark that the $G$-action restricts to $\Aut^{\#}X$ (if $\phi\in\Aut^{\#}X$, then for every divisor $D$ on $X$, $(\sigma\phi\sigma^{-1})^*(D) \sim D$) and to define the $G$-action on $\Aut^*X$ by $\sigma.\phi^* := (\sigma\phi\sigma^{-1})^*$.

The exact sequence of cohomology sets and the following Theorem are our main tools for proving Theorem \ref{main} :\newpage
\begin{thm}\label{coho} Let $G$ be a finite group.
\begin{enumerate}
\item \emph{\cite[6.2]{borel-serre}} If $A$ is a linear algebraic group defined over $\R$ (i.e. a complex linear algebraic group equipped with a real structure), then $H^1(\mathrm{Gal}(\C/\R),A)$ is finite for the "natural action" (corresponding to the real structure of $A$).
\item If $A$ contains a subgroup of finite index isomorphic to $\Z^k$ for some $k\in\N$, then $H^1(G,A)$ is finite independently of the action of $G$ on $A$. (This is true in particular if $A$ is finite, i.e. $k=0$.)
\end{enumerate}
\end{thm}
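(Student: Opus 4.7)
Part~(1) is cited directly from \cite{borel-serre}, so I will sketch the argument only for part~(2). The plan is to reduce, via the non-abelian exact sequence of Proposition~\ref{exact}, to the elementary case $A\cong\Z^k$, which is then handled by classical group cohomology.

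First I would produce a subgroup $N$ of $A$ that is simultaneously $G$-stable, normal in $A$, of finite index, and isomorphic to $\Z^k$. Starting from a subgroup $H_0\cong\Z^k$ of finite index in $A$, set $H := \bigcap_{\sigma\in G}\sigma(H_0)$: this is a finite intersection of finite-index subgroups of $A$, hence itself of finite index; it is $G$-stable by construction; and as a finite-index subgroup of $H_0\cong\Z^k$, it is free abelian of rank $k$. Now let $N$ be the normal core of $H$ in $A$, i.e. $N := \bigcap_{a\in A}aHa^{-1}$: this is a finite intersection (there are only $[A:N_A(H)]\leq[A:H]$ distinct conjugates), so $N$ has finite index in $A$; it is normal in $A$ by construction; it is $G$-stable since for every $\sigma\in G$, $\sigma(aHa^{-1})=\sigma(a)\,H\,\sigma(a)^{-1}$ and $\sigma$ permutes the conjugates of $H$; and it is still $\cong\Z^k$ as a finite-index subgroup of $H$.

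With $N$ in hand, the exact sequence of $G$-groups
$$1\longrightarrow N \longrightarrow A \longrightarrow A/N \longrightarrow 1$$
fits the hypotheses of Proposition~\ref{exact} provided $H^1(G, A/N)$ is finite and $H^1(G, N_b)$ is finite for every $b\in Z^1(G, A)$. The first is immediate, since $A/N$ is a finite group and $Z^1(G, A/N)\subseteq (A/N)^{|G|}$ is a finite set. For the second, each twist $N_b$ is still $\Z^k$ as an abstract group, equipped with some $G$-action, so it suffices to prove finiteness of $H^1(G, M)$ for an arbitrary $G$-action on $M=\Z^k$. Two classical facts suffice: $Z^1(G, M)\subseteq\mathrm{Maps}(G, M)=M^{|G|}$ is a finitely generated abelian group, so its quotient $H^1(G, M)$ is also finitely generated abelian; and the standard corestriction-restriction identity gives $|G|\cdot H^1(G, M)=0$. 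A finitely generated abelian group annihilated by a positive integer is finite, which closes the argument.

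The one delicate point I anticipate is the bookkeeping of the first step: the subgroup $N$ must be normal (to form the quotient $A/N$), $G$-stable (so that the sequence is $G$-equivariant), of finite index (so that $H^1(G, A/N)$ is trivially finite), and of full rank $k$ (so that every twist $N_b$ remains free abelian of finite rank). The double-intersection construction secures all four conditions simultaneously; everything downstream is either standard non-abelian cohomology input or the classical annihilation lemma for $H^1$ of a finite group.
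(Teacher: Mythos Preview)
Your argument is correct and takes a genuinely different route from the paper's. The paper follows the Borel--Serre strategy of \cite[3.8]{borel-serre}: it embeds each cocycle as a morphism $G\to A\rtimes G$, observes that cohomology classes correspond to $A$-conjugacy classes of finite subgroups of $A\rtimes G$, and then invokes a result of Grunewald--Platonov \cite{platonov1} to the effect that a group commensurable with $\Z^k$ is arithmetic and hence has only finitely many conjugacy classes of finite subgroups. Your approach instead manufactures a normal, $G$-stable, finite-index $N\cong\Z^k$ inside $A$ and applies Proposition~\ref{exact} to the short exact sequence $1\to N\to A\to A/N\to 1$, reducing everything to the classical fact that $H^1(G,\Z^k)$ is a finitely generated abelian group killed by $|G|$. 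The paper's route is more conceptual and connects to the broader theme of conjugacy classes of finite subgroups in arithmetic groups, but it imports a nontrivial external theorem; your route is entirely self-contained and arguably more elementary, at the cost of the bookkeeping in Step~1 (which you carry out correctly: the double intersection---first over $G$, then over $A$-conjugates---does secure normality, $G$-stability, finite index, and full rank simultaneously). Either proof would serve the paper's purposes equally well.
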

\begin{proof} One can find a proof of the first point in \cite[6.2]{borel-serre}, so we only prove the second point, following an idea of Borel and Serre (\cite[3.8]{borel-serre}). Since $G$ acts on $A$ by automorphisms, we can form the semidirect product $A\rtimes G = \{(a,\sigma)|\,a\in A,\,\sigma\in G\}$ with the group law defined by 
$$(a,\sigma)(a',\sigma') := (a\,\sigma.a',\,\sigma\sigma').$$
If $a$ is a cocycle, then we can easily see that the map $\fonction{\tilde{a}}{G}{A\rtimes G}{\sigma}{(a_{\sigma},\sigma)}$ is a group morphism. Moreover, note that if $\alpha\in A$, $\sigma\in G$ and $a\in Z^1(G,A)$, then 
$$(\alpha^{-1},1)(a_{\sigma},\sigma)(\alpha,1) = (\alpha^{-1}a_{\sigma},\sigma)(\alpha,1) = (\alpha^{-1}a_{\sigma}\sigma.\alpha,\sigma).$$
Thus, two cocycles $a$ and $b$ are equivalent if and only if $\exists\alpha\in A$ such that $\forall \sigma\in G,\;(\alpha,1)\tilde{a}(\sigma)(\alpha^{-1},1) = \tilde{b}(\sigma)$ and this is equivalent to saying that the finite subgroups $\tilde{a}(G)$ and $\tilde{b}(G)$ of $A\rtimes G$ are conjugate by an element $(\alpha,1)$ of $A\rtimes G$.\\
\indent By assumption, $A$ contains a subgroup of finite index isomorphic to $\Z^k$ for some $k\in\N$ and $\Z^k$ is a solvable arithmetic group : thus, by \cite{platonov1} (see the Corollary of Theorem 1.3), $A$ is an arithmetic group. Since $G$ is finite, $A$ is of finite index in $A\rtimes G$ and thus $A\rtimes G$ has a finite number of conjugacy classes of finite subgroups by \cite[Th.1.4]{platonov1}. Now, this is not exactly what we want because we need to prove that there is a finite number of conjugacy classes \textit{only via elements of $A$}. But since $A$ is of finite index in $A\rtimes G$, the desired finiteness is a consequence of the finiteness of the number of conjugacy classes of finite subgroups.\\
\indent Thus, there exist $a_1,\dots,a_p\in Z^1(G,A)$ such that for every $a\in Z^1(G,A)$, there exists $i\in\intl 1;p\intr$ and $(\alpha,1)\in A\subseteq A\rtimes G$ such that $\tilde{a}(G) = (\alpha,1)\tilde{a_i}(G)(\alpha^{-1},1)$. Now, we can conclude that $H^1(G,A) = \{[a_1],\dots,[a_p]\}$ is finite.
\end{proof}

We now recall the statement of Theorem \ref{main}, before proving it :
\begin{thm}If a rational surface $X$ has an infinite number of non-equivalent real structures, then $X$ is a blowing up of $\P^2$ at $r\geq 10$ points and has at least one automorphism of positive entropy.\end{thm}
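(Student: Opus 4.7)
The plan is to apply the Galois cohomology machinery developed in the previous sections. Fix a real structure $\sigma$ on $X$ (otherwise $X$ has no real form and the statement is vacuous) and set $G = \langle \sigma \rangle \simeq \Z/2\Z$, so that $H^1(G, \Aut X)$ is the pointed set of equivalence classes of real structures on $X$. The exact sequence
$$1 \longrightarrow \Aut^{\#} X \longrightarrow \Aut X \longrightarrow \Aut^* X \longrightarrow 1$$
is $G$-equivariant, so Proposition~\ref{exact} reduces the finiteness of $H^1(G, \Aut X)$ to that of $H^1(G, \Aut^* X)$ together with that of $H^1(G, (\Aut^{\#} X)_b)$ for every $b \in Z^1(G, \Aut X)$.

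The finiteness of $H^1(G, (\Aut^{\#} X)_b)$ follows almost directly from Section~\ref{aut-diese}: Theorem~\ref{autdiese} realizes $\Aut^{\#} X$ as a complex linear algebraic group carrying the real structure $\tilde\sigma$ induced by $\sigma$, and each twist $b$ amounts to replacing $\tilde\sigma$ by another real structure on the same $\C$-algebraic group, so Theorem~\ref{coho}(1) yields finiteness for every twist.

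The bulk of the argument is the finiteness of $H^1(G, \Aut^* X)$, which I would obtain from Theorem~\ref{coho}(2) by exhibiting a finite-index subgroup of $\Aut^* X$ isomorphic to $\Z^k$. Under the contrapositive hypothesis --- namely that $X$ is not a blow-up of $\P^2$ at $r \geq 10$ points, \emph{or} $X$ has no automorphism of positive entropy --- both alternatives reduce to the zero-entropy case: the second by hypothesis, and the first because for $X$ non-basic or basic with $r \leq 9$ the introduction recalls that $\Aut^* X$ is contained in a Weyl group which is either finite (for $r \leq 8$) or of affine type (for $r = 9$), hence contains no hyperbolic element. Thus every $\phi \in \Aut X$ acts on the signature-$(1,\rho(X)-1)$ lattice $\Pic X$ as an elliptic or parabolic isometry, and the standard dichotomy for subgroups of $\O(1,n)$ without hyperbolic elements, combined with the discreteness of $\Aut^* X \subset \O(\Pic X)$, forces $\Aut^* X$ to be virtually $\Z^k$.

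The main obstacle is precisely this final structural step: proving uniformly, rather than case by case through del Pezzo surfaces, minimal rational surfaces, collinear point blow-ups, conical sextuplets, infinitely near points, and so on, that a subgroup of $\O(\Pic X)$ containing no hyperbolic element admits a $\Z^k$ subgroup of finite index. The virtual abelianness follows from general facts about isometries of real hyperbolic space, but upgrading to finite generation --- and bounding the rank $k$ --- requires exploiting that $\Aut^* X$ either stabilizes a point in hyperbolic space (the elliptic case, giving finiteness) or an isotropic ray (the parabolic case, corresponding to an $\Aut X$-preserved rational or genus-one fibration on $X$). This is where I expect the proof to require the most care.
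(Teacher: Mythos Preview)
Your overall architecture is exactly the paper's: apply Proposition~\ref{exact} to the exact sequence, dispatch the $\Aut^{\#}X$ twists via Theorem~\ref{autdiese} and Theorem~\ref{coho}(1), and reduce to showing $\Aut^*X$ is virtually $\Z^k$ so that Theorem~\ref{coho}(2) applies. Two points of divergence are worth flagging.

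First, your treatment of the non-basic case has a gap. You write that ``for $X$ non-basic or basic with $r\leq 9$ the introduction recalls that $\Aut^*X$ is contained in a Weyl group'', but the introduction states this containment only for \emph{basic} surfaces; no Weyl group is defined when $X$ does not dominate $\P^2$. The paper handles the non-basic case separately and directly: by \cite[Lemma~1.1 and Cor.~1.2]{harbourne}, if $\Aut^*X$ were infinite then $X$ would contain infinitely many $(-1)$-curves and hence dominate $\P^2$, contradicting non-basicness. So $\Aut^*X$ is finite outright in this case, and Theorem~\ref{coho}(2) applies with $k=0$.

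Second, the structural step you identify as the main obstacle --- that zero entropy forces $\Aut^*X$ to be virtually $\Z^k$ --- is not proved from scratch in the paper. It is imported wholesale from \cite[Th.~3.13]{grivaux}, which gives $\Aut^*X\cap\O^+(\Pic X)\simeq \Z^k\rtimes H$ with $H$ finite; this is precisely the elliptic/parabolic dichotomy you sketch, carried out with the needed care about discreteness and finite generation. Likewise, the reduction of $r\leq 9$ to the zero-entropy case is handled by citing \cite[Prop.~2.2]{diller} rather than by analyzing the affine Weyl group $W_9$ directly. So your plan is sound, but the paper outsources exactly the parts you flag as delicate.
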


\begin{proof} We prove the contrapositive of the theorem. Recall that we use the following $G$-equivariant exact sequence 
$$1 \longrightarrow \Aut^{\#}X \longrightarrow \Aut X \longrightarrow \Aut^*X \longrightarrow 1$$
\indent If $X$ is a rational surface, we begin by showing that $H^1(G,(\Aut^{\#}X)_b)$ is finite for every $b\in Z^1(G,\Aut X)$ and then, we show that $H^1(G,\Aut^*X)$ is finite if $X$ is non-basic or if it is a basic surface with no automorphism of positive entropy ; by Proposition \ref{exact}, it suffices to conclude.
Now, remark that $G=\langle\sigma\rangle$ acts on $(\Aut^{\#}X)_b$ by $\sigma*\phi := b_{\sigma}(\sigma\phi\sigma^{-1})b_{\sigma}^{-1} = \sigma_b\phi\sigma_b^{-1}$ where $\sigma_b := b_{\sigma}\sigma$ is a new real structure on $X$ (here is the meaning of the torsion : it corresponds to a change of real structure). Thus, it is always the cohomology set corresponding to some real structure on $X$. Now, by Theorem \ref{autdiese}, the action of a real structure on $\Aut^{\#}X$ defines a real structure on this complex linear algebraic group. Thus, by \textit{(1)} of Theorem \ref{coho}, $H^1(G,(\Aut^{\#}X)_b)$ is finite for every $b\in Z^1(G,\Aut X)$.\\
\indent Now, let us remark that if $X$ is a non-basic rational surface (i.e. if $X$ is not a blow-up of $\P^2$), then $\Aut^*X$ is finite. Indeed, by \cite[Lemma 1.1]{harbourne}, if $\Aut^*X$ is infinite, then $X$ contains infinitely many exceptional curves. But, the Corollary 1.2 of \cite{harbourne} shows that in this case, $X$ dominates $\P^2$. Thus $H^1(G,\Aut^*X)$ is finite and every non-basic rational surface has a finite number of real forms.\\
\indent Finally, if $X$ is a blow-up of $\P^2$ having no automorphism of positive entropy, then by \cite[Th.3.13]{grivaux}, $A:=\Aut^*X\cap \mathrm{O}^{+}(\Pic X)\simeq \Z^k\rtimes H$, where $k\geq 0$ and $H$ is a finite group. But $A$ is of index at most two in $\Aut^*X$, so by \textit{(2)} of Theorem \ref{coho}, $H^1(G,\Aut^*X)$ is finite and $X$ has a finite number of real forms. In particular, if $X$ is obtained by blowing-up $r\leq 9$ points, then it has no automorphism of positive entropy by \cite[Prop.2.2]{diller}.
\end{proof}

\begin{rmk} In \cite{kharlamov}, finiteness is claimed for blow-ups at 10 points but, in fact, in a discussion with Kharlamov, we saw that his argument was only valid for unnodal Coble surfaces (this argument is used below in the proof of Proposition \ref{cr-special}).\end{rmk}

\section{Some other finiteness results}\label{other}
\subsection{Cremona special surfaces} Let us recall from \cite{cantat-dolgachev} the following definition :
\begin{defn} A point set $\mathcal P=\{p_1,\dots,p_r\}$ of $\P^2$ is called \textbf{Cremona special} if $r\geq 9$ and if the surface $X$ obtained by blowing up $\mathcal P$ is such that $\Aut^*X$ has finite index in the (infinite) Weyl group $W_X$. The surface $X$ is then also called Cremona special.\end{defn}
This condition of finiteness roughly expresses that $\Aut^*X$ is as large as possible. In \cite{cantat-dolgachev}, it is proved that a Cremona special complex rational surface is either an unnodal Halphen surface (with $r=9$) or an unnodal Coble surface (with $r=10$).
\begin{prop}\label{cr-special} If $X$ is a Cremona special rational surface, then $X$ has a finite number of real forms. \end{prop}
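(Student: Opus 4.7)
The plan is to reuse the strategy from Theorem \ref{main}: applying Proposition \ref{exact} to the $G$-equivariant exact sequence
$$1 \longrightarrow \Aut^{\#}X \longrightarrow \Aut X \longrightarrow \Aut^*X \longrightarrow 1,$$
finiteness of the set of real forms reduces to two statements. The finiteness of $H^1(G,(\Aut^{\#}X)_b)$ for every $b\in Z^1(G,\Aut X)$ was already obtained in the proof of Theorem \ref{main} (via Theorem \ref{autdiese}, since each twist inherits a real structure on the complex linear algebraic group $\Aut^{\#}X$, together with part \textit{(1)} of Theorem \ref{coho}). So everything boils down to showing that $H^1(G,\Aut^*X)$ is finite, and I will handle this using the Cantat--Dolgachev dichotomy \cite{cantat-dolgachev}: $X$ is either an unnodal Halphen surface ($r=9$) or an unnodal Coble surface ($r=10$), and in both cases $\Aut^*X$ has finite index in $W_X$.

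In the Halphen case, $W_X$ is the affine Weyl group $W(\tilde{E}_8)\cong Q(E_8)\rtimes W(E_8)\cong \Z^8\rtimes W(E_8)$, so it contains the translation subgroup $Q(E_8)$ as a normal subgroup of finite index. Intersecting $Q(E_8)$ with $\Aut^*X$ produces a subgroup of $\Aut^*X$ of finite index that is isomorphic to $\Z^8$ (using the standard fact that any finite-index subgroup of $\Z^8$ is isomorphic to $\Z^8$), whence part \textit{(2)} of Theorem \ref{coho} immediately yields the finiteness of $H^1(G,\Aut^*X)$.

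The real obstacle is the Coble case, where $W_X=W(E_{10})$ is a hyperbolic Weyl group that is \emph{not} virtually abelian, so the translation trick fails. The idea is instead to invoke the arithmeticity of $W(E_{10})$: as (a subgroup of finite index in) the isometry group of the even unimodular Lorentzian lattice of rank $10$, it is an arithmetic lattice in $\O(1,9)$. Since $\Aut^*X$ is commensurable with it, $\Aut^*X$ is also arithmetic, and so is the semidirect product $A\rtimes G$ with $A=\Aut^*X$ (because $A$ has finite index in $A\rtimes G$). Platonov's finiteness theorem \cite[Th.1.4]{platonov1} then asserts that $A\rtimes G$ has only finitely many conjugacy classes of finite subgroups, and the semidirect-product translation performed in the proof of Theorem \ref{coho}\textit{(2)} converts this into the desired finiteness of $H^1(G,\Aut^*X)$. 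The main point to verify carefully is this arithmeticity step, together with the fact that the conjugacy argument of Theorem \ref{coho}\textit{(2)} still goes through without any virtually abelian hypothesis on $A$; a posteriori, this is what Kharlamov's original argument (alluded to in the remark preceding this section) amounts to in the Coble case.
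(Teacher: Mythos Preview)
Your proposal is correct and follows essentially the same route as the paper: reduce to finiteness of $H^1(G,\Aut^*X)$, then exploit that $\Aut^*X$ is arithmetic (finite index in $W_X$, itself finite index in $\O(K_X^{\perp})$) and invoke the Borel--Serre/Platonov finiteness of conjugacy classes of finite subgroups in arithmetic groups. The only organizational difference is that the paper treats both cases uniformly by citing \cite[3.8]{borel-serre}, whereas you shortcut the Halphen case via the virtually-abelian part \textit{(2)} of Theorem~\ref{coho} and, for the Coble case, correctly observe that the \emph{proof} of Theorem~\ref{coho}\textit{(2)} already goes through for any arithmetic group---which is precisely the content of \cite[3.8]{borel-serre}.
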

\begin{proof} We will prove that $\Aut^* X$ is of finite index in $\O(K_X^{\perp})$ : let us first explain why this is sufficient. If $\Aut^* X$ is of finite index in $\O(K_X^{\perp})$, then $\Aut^* X$ is an arithmetic $G$-group, since we can embed it in $\O(K_X^{\perp}\otimes_{\Z}\Q)$, which is a linear algebraic $G$-group defined over $\Q$. By \cite[3.8]{borel-serre}, we see that $H^1(G,\Aut^*X)$ is finite.\\
\indent Now, $\Aut^*X$ is of finite index in the Weyl group $W_X$ because $X$ is Cremona special, and in this case, $r=9$ or $r=10$. Thus, we have to explain why $W_X$ is of finite index in $\O(K_X^{\perp})$. If $r=9$, a classical result about the affine type Weyl group $W_9 = W(E_{2,3,6}) = W(\widetilde{E_8})$ (cf. \cite[p.112]{cossec-dolgachev} or \cite[Prop.7.5.9]{dolgachev-cag}) says that in this case
$$\O(K_X^{\perp})' = A(\widetilde{E_8}) \ltimes W_9$$
where $\O(K_X^{\perp})' := \{ \phi\in \O(K_X^{\perp})|\;\phi(K_X) = K_X\}$ is a subgroup of index 2 of $\O(K_X^{\perp})$ and $A(\widetilde{E_8})$ is the automorphism group of the Dynkin diagram of $\widetilde{E_8}$. Clearly, this diagram has no symmetries, so this group is trivial. Hence $W_9=\O(K_X^{\perp})'$ is of index 2 in $\O(K_X^{\perp})$.
If $r=10$, then $E_{10} := E_{2,3,7} \simeq K_X^{\perp}$ is a Nikulin lattice, that is, an even hyperbolic lattice isomorphic to $E_8\oplus H$, where $H$ is the "hyperbolic plane", i.e. a rank-two lattice equipped with a symmetric bilinear form of matrix $\begin{bmatrix} 0 & 1\\1& 0\end{bmatrix}$. The classification of Nikulin lattices (see \cite{dolgachev-bourbaki} or \cite[Ex.2.7]{dolgachev-reflections}) shows that $W_X$ is of finite index in $\O(K_X^{\perp})$.
\end{proof}
Remark that Theorem \ref{main} does not ensure the desired finiteness for general unnodal Coble surfaces, because they have automorphisms of positive entropy. (see \cite{deserti} : Déserti constructs ten elliptic fibrations on a Coble surface and claims that for a general Coble surface $X$, one obtains a subgroup of $\Aut X$ isomorphic to $(\Z^8)^{*10}$, hence by \cite[Th.3.13]{grivaux}, $\Aut X$ must contain automorphisms of positive entropy).

\subsection{Tits alternative} There is a kind of Tits alternative for automorphism groups of smooth projective surfaces :
\begin{thm} \cite[Th.1.6]{zhang} \label{tits}
Let $X$ be a smooth projective complex surface and $A$ a subgroup of $\Aut X$. If $A$ contains at least one automorphism of positive entropy, then $A$ satisfies exactly one of the following assertions :
\begin{itemize}
\item $A$ contains the non-abelian free group $\Z * \Z$; or
\item there is a normal subgroup $B$ of $A$ such that $|A / B| \leq 2$ and $B = \langle h_m \rangle \ltimes T$ with $h_m$ of positive entropy and $p(T)$ finite\footnote{Remember that $p$ is the natural morphism $\Aut X\to \O(\Pic X)$, which we have seen in the Introduction.}. Moreover, if $X$ is rational, then $T$ itself is finite.
\end{itemize}
\end{thm}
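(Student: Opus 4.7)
The plan is to exploit the induced action of $A$ on the Picard lattice and the hyperbolic geometry it carries. Recall that $\Pic X$ has a symmetric bilinear form of signature $(1,\rho(X)-1)$ by the Hodge Index Theorem, so the projectivization of the positive cone is a copy of real hyperbolic space $\mathbb{H}^{\rho(X)-1}$, on which the image $p(A)\subseteq \O(\Pic X)$ acts isometrically. By Gromov--Yomdin, an automorphism has positive entropy precisely when its action on $\Pic X\otimes \R$ has spectral radius $>1$, that is, when it acts as a \emph{hyperbolic} isometry of $\mathbb{H}^{\rho(X)-1}$ with translation length equal to its entropy and with two distinct fixed points on the boundary at infinity.

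With this setup I would invoke the classical Tits-type dichotomy for groups of isometries of hyperbolic spaces: such a group either contains a non-abelian free subgroup, produced by a ping-pong argument between two hyperbolic isometries whose pairs of boundary fixed points are disjoint, or it preserves setwise a subset of $\overline{\mathbb{H}^{\rho(X)-1}}$ of cardinality at most two. In the first case, I would lift two free generators from $p(A)$ to $A$: any nontrivial relation among the lifts would project to a nontrivial relation in the free subgroup of $p(A)$, so the lifts again generate a free group, giving the first alternative of the theorem. In the second case, the presence of the hyperbolic element $h_m^{*}\in p(A)$ forces the preserved subset to be exactly the pair of axis-endpoints of $h_m^{*}$; then $p(A)$ admits an index-$\leq 2$ normal subgroup consisting of elements that fix each of these two endpoints individually, and in the integral lattice setting this subgroup is virtually infinite cyclic, generated up to a finite rotational stabilizer by $h_m^{*}$. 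Setting $B$ to be the preimage in $A$ of this subgroup produces a normal subgroup of index $\leq 2$ fitting into a short exact sequence $1 \to T \to B \to \langle h_m^{*}\rangle \to 1$ with $T := B\cap \Aut^{\#}X$ and $p(T)$ finite; since $\langle h_m^{*}\rangle \simeq \Z$ is free, the sequence splits into $B=\langle h_m\rangle\ltimes T$ for a suitable lift $h_m$ of the generator.

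The main obstacle, and the part requiring genuinely surface-specific input, is the final refinement that $T$ itself (not merely its image $p(T)$) is finite when $X$ is rational. Here $T$ is a subgroup of the complex linear algebraic group $\Aut^{\#}X$ (Theorem \ref{autdiese}), normalized by the positive-entropy automorphism $h_m$. The task reduces to showing that a smooth projective rational surface admitting an automorphism of positive entropy has finite $\Aut^{\#}X$, so that $T$ inherits finiteness. The intuition is that such an $X$ must be basic with $r\geq 10$ blown-up points in sufficiently general position (cf.\ \cite[Prop.2.2]{diller}), and any positive-dimensional component of $\Aut^{\#}X$ would yield a one-parameter family of automorphisms of $X$ fixing the class of every exceptional divisor, forcing a very special configuration of the blown-up points that is incompatible with the genericity imposed by the existence of a hyperbolic $h_m^{*}$ on $\Pic X$. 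Making this heuristic rigorous---by ruling out both a non-trivial connected component and an infinite discrete part of $\Aut^{\#}X$ in the presence of a positive entropy automorphism---is the core technical difficulty of the theorem.
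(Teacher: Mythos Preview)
The paper does not prove this theorem at all: it is simply quoted from \cite[Th.1.6]{zhang} and used as a black box to analyse the second case of the alternative. So there is no ``paper's own proof'' to compare your attempt against.

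That said, your sketch follows the standard and correct strategy for results of this type---passing to the isometric action on the hyperbolic space modelled on the positive cone in $\Pic X\otimes\R$, applying the ping-pong/elementary dichotomy for groups acting on Gromov-hyperbolic spaces, and then analysing the stabiliser of the axis of a hyperbolic element in the integral lattice. This is indeed how Zhang's argument proceeds. Two points deserve more care. First, your definition $T:=B\cap\Aut^{\#}X$ forces $p(T)$ to be trivial, not merely finite; but the stabiliser of the two boundary points in $\O(\Pic X)$ is in general only \emph{virtually} cyclic, so $T$ should rather be taken as the kernel of a surjection $B\twoheadrightarrow\Z$ obtained after projecting to the translation-length homomorphism along the axis, and then $p(T)$ is the finite ``rotational'' part. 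Second, you correctly identify the finiteness of $T$ (not just $p(T)$) in the rational case as the substantive step; your heuristic that positive entropy forces the blown-up points into a position too general to support a positive-dimensional $\Aut^{\#}X$ is the right intuition, but turning it into a proof requires the kind of careful case analysis (or an appeal to the structure of $\Aut^{\#}X$ for basic rational surfaces with $r\ge 10$) that Zhang carries out.
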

It may be surprising to note that we can prove a partial finiteness result in the first case :
\begin{prop}\label{h1-freeprod} Let $k\in\N$, $k\geq 2$. If $G=\Z/2\Z$, then $H^1(G,\Z^{*k})$ is finite independently of the action of $G$.\end{prop}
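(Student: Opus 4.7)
The plan is to reduce the finiteness of $H^1(G,\Z^{*k})$ to counting conjugacy classes of finite subgroups in a suitable virtually free group, using the framework already introduced in the proof of Theorem \ref{coho}(2).

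First I would observe that $\Z^{*k}$ is exactly the free group $F_k$ of rank $k$, since the free product of $k$ copies of the infinite cyclic group $\Z$ is, by definition, the free group on $k$ generators. For any action of $G$ on $A:=\Z^{*k}=F_k$ by automorphisms, form the semidirect product $\Gamma:=A\rtimes G$ as in the proof of Theorem \ref{coho}(2). Then $\Gamma$ is finitely generated (by $k+1$ elements) and contains $A$ as a normal subgroup of index $2$, so $\Gamma$ is a finitely generated virtually free group.

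Next I would invoke the Karrass--Pietrowski--Solitar structure theorem, which asserts that every finitely generated virtually free group is the fundamental group of a finite graph of finite groups, equivalently acts on a tree with finite stabilizers and finite quotient graph. It is then a standard consequence of Bass--Serre theory that any such group has only finitely many conjugacy classes of finite subgroups: each finite subgroup fixes a vertex of the tree and is therefore conjugate to a subgroup of one of the finitely many (finite) vertex stabilizers.

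Finally I would translate this finiteness back to cohomology as follows. Since $F_k$ is torsion-free, every element of order $2$ in $\Gamma$ lies outside $A$ and so is of the form $(a_\sigma,\sigma)$ with $a_\sigma\,\sigma.a_\sigma=\Id$, i.e.\ corresponds to a cocycle $a\in Z^1(G,A)$; conversely every such cocycle yields an order-$2$ subgroup $\tilde a(G)=\{(1,1),(a_\sigma,\sigma)\}$ of $\Gamma$. By the argument recalled in the proof of Theorem \ref{coho}(2), two cocycles are equivalent if and only if the corresponding order-$2$ subgroups of $\Gamma$ are conjugate by an element of $A\subseteq\Gamma$. Since $A$ has index $2$ in $\Gamma$, each $\Gamma$-conjugacy class of order-$2$ subgroups splits into at most $2$ $A$-conjugacy classes, so there are finitely many such $A$-classes and $H^1(G,A)$ is finite. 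The main obstacle is really just the appeal to the Karrass--Pietrowski--Solitar theorem, which is the nontrivial ingredient; everything else is bookkeeping within the semidirect-product formalism already set up in Theorem \ref{coho}(2), and crucially requires no hypothesis on the $G$-action beyond its being by group automorphisms.
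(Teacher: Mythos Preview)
Your argument is correct, but it is not the route the paper takes. The paper invokes Kambayashi's theorem, which identifies $H^1(G,\Z^{*k})$ with the amalgamated sum (pushout in pointed sets) of $k$ copies of $H^1(G,\Z)$; since each $H^1(G,\Z)$ is finite and a pushout of finitely many finite pointed sets is finite, the result follows in one line. Your proof instead recycles the semidirect-product trick from Theorem~\ref{coho}(2): you form $\Gamma=F_k\rtimes G$, observe that it is finitely generated virtually free, and then appeal to the Karrass--Pietrowski--Solitar structure theorem plus Bass--Serre theory to conclude that $\Gamma$ has only finitely many conjugacy classes of finite subgroups; the torsion-freeness of $F_k$ lets you identify these with cocycle classes. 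Both arguments are short once the external input is granted, but the inputs are quite different in flavor: Kambayashi's result is a statement purely about non-abelian $H^1$ and free products, while KPS is a structural theorem in geometric group theory. Your approach has the virtue of staying entirely within the ``count conjugacy classes of finite subgroups'' paradigm already set up for Theorem~\ref{coho}(2), and it makes transparently clear that no hypothesis on how $G$ acts on the free factors is needed; the paper's route is lighter if one is willing to quote Kambayashi.
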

\begin{proof} By \cite[Th.1]{kambayashi}, $H^1(G,\Z^{*k})$ is the amalgamed sum (or pushout) of $k$ copies of $H^1(G,\Z)$ in the category of pointed sets. But, if $(A,a)$ and $(B,b)$ are two pointed sets, then their amalgamed sum is $(C,(a,b))$, where $C=(A\times\{b\})\cup(\{a\}\times B)$. Thus, Theorem \ref{coho} allows us to conclude that $H^1(G,C)$ is finite independently of the action of $G$.(\footnote{In fact, one can prove directly owing to the definitions that $H^1(G,\Z)$ is finite independently of the action of $G$, without using the deep Theorem \ref{coho}.})\end{proof}
In the second case of this alternative, Theorem \ref{coho} shows that $H^1(G,A)$ is finite. This allows us to study some examples of surfaces with automorphisms of positive entropy :
\begin{prop} The surfaces constructed in \cite[Th.2]{bedford-kim} and in \cite[7.1,7.2]{mc-mullen} have a finite number of equivalence classes of real structures. \end{prop}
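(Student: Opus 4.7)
The plan is to apply the second alternative of Theorem~\ref{tits} directly to $\Aut X$. I would first verify that for each surface $X$ constructed in \cite[Th.2]{bedford-kim} and in \cite[7.1, 7.2]{mc-mullen}, the full automorphism group $\Aut X$ falls into the second case of Theorem~\ref{tits}: there is a normal subgroup $B \leq \Aut X$ of index at most $2$ of the form $B = \langle h_m\rangle \ltimes T$, with $h_m$ of positive entropy and $T$ finite (the rationality of $X$ forcing finiteness of $T$). In each case this amounts to excluding the first (free-product) alternative of Theorem~\ref{tits} by using the explicit descriptions of $\Aut X$ provided by the cited papers --- the dynamical data in \cite{bedford-kim} and the Coxeter/Weyl realization of the action on $\Pic X$ in \cite{mc-mullen} --- which are tight enough to pin down the whole automorphism group, not merely the distinguished positive-entropy element each construction produces.

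Once this is granted, the subgroup $\langle h_m\rangle \simeq \Z$ sits in $B$ with index $|T| < \infty$, hence in $\Aut X$ with index at most $2|T|$. Thus $\Aut X$ contains a subgroup of finite index isomorphic to $\Z$, and part~(2) of Theorem~\ref{coho} (with $k=1$) immediately yields the finiteness of $H^1(G, \Aut X)$ independently of the $G$-action --- in particular for the action by conjugation induced by any real structure $\sigma$ on $X$. Since $H^1(G, \Aut X)$ is the set of equivalence classes of real structures on $X$, the proposition follows. (Alternatively, one can feed this into the $G$-equivariant exact sequence
$$1 \longrightarrow \Aut^{\#}X \longrightarrow \Aut X \longrightarrow \Aut^*X \longrightarrow 1$$
and use Theorem~\ref{autdiese} together with part~(1) of Theorem~\ref{coho} to treat the kernel and Proposition~\ref{exact} to conclude, but this detour is unnecessary here.)

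The main obstacle is therefore the first step: extracting from \cite{bedford-kim} and \cite{mc-mullen} a description of the full group $\Aut X$ precise enough to locate it in the second case of Theorem~\ref{tits}, rather than merely asserting the existence inside it of a positive-entropy element. For these constructions the geometry is sufficiently rigid that this is possible --- the authors essentially compute $\Aut X$ up to finite index --- but it is the only non-formal input, and must be checked case by case against those references. Everything else reduces to a direct invocation of Theorems~\ref{tits} and~\ref{coho}.
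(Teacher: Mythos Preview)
Your proposal is correct and is essentially the paper's own argument: the cited references compute $\Aut X$ (or $\Aut^*X$ with $\Aut^{\#}X$ finite) explicitly as $\Z\rtimes(\text{finite})$, so $\Aut X$ contains $\Z$ with finite index and Theorem~\ref{coho}(2) finishes. The only difference is that the paper quotes those explicit computations directly rather than routing them through Theorem~\ref{tits}; since you end up invoking the same descriptions from \cite{bedford-kim} and \cite{mc-mullen} anyway to rule out the free-product alternative, the Tits step is a harmless but unnecessary detour.
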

\begin{proof} In \cite[Th.2]{bedford-kim}, it is proved that the automorphism group of their surfaces $X$ satisfies $\Aut^*X \simeq \Z/2\Z*\Z/2\Z\simeq\Z\rtimes \Z/2\Z$ and $\Aut^{\#}X$ is finite, and in \cite[7.1,7.2]{mc-mullen}, Mc Mullen obtains an automorphism group isomorphic to $\Z\rtimes T$, where $T$ is a finite group. In any case, Theorem \ref{coho} allows us to conclude. \end{proof}
\indent\textbf{Acknowledgements.} The author is grateful to Frédéric Mangolte for asking him this question, and also for his advice. We want to thank Jérémy Blanc, Serge Cantat, Stéphane Druel, Viatcheslav Kharlamov and Stéphane Lamy for useful discussions. 

\end{document}